\newtheorem{theorem}{Theorem}
\newtheorem{proposition}[theorem]{Proposition}
\newtheorem{lemma}[theorem]{Lemma}
\begin{document}

\title{Links with small lattice stick numbers}

\author[K. Hong]{Kyungpyo Hong}
\address{Department of Mathematics, Korea University, Anam-dong, Sungbuk-ku, Seoul 136-701, Korea}
\email{cguyhbjm@korea.ac.kr}
\author[S. No]{Sungjong No}
\address{Department of Mathematics, Korea University, Anam-dong, Sungbuk-ku, Seoul 136-701, Korea}
\email{blueface@korea.ac.kr}
\author[S. Oh]{Seungsang Oh}
\address{Department of Mathematics, Korea University, Anam-dong, Sungbuk-ku, Seoul 136-701, Korea}
\email{seungsang@korea.ac.kr}

\keywords{knot, stick number, cubic lattice}
\thanks{{\em PACS numbers\/}. 02.10.Kn, 82.35.Pq, 02.40.Sf}
\thanks{This research was supported by Basic Science Research Program through
the National Research Foundation of Korea(NRF) funded by the Ministry of Science,
ICT \& Future Planning(MSIP) (No.~2011-0021795).}

\begin{abstract}
Knots and links have been considered to be useful models for structural analysis of
molecular chains such as DNA and proteins.
One quantity that we are interested on molecular links is the minimum number of monomers
necessary to realize them.
In this paper we consider every link in the cubic lattice.
Lattice stick number $s_L(L)$ of a link $L$ is defined to be the minimal number of sticks
required to construct a polygonal representation of the link in the cubic lattice.
Huh and Oh found all knots whose lattice stick numbers are at most 14.
They proved that only the trefoil knot $3_1$ and the figure-8 knot $4_1$ have lattice stick
numbers 12 and 14, respectively.
In this paper we find all links with more than one component
whose lattice stick numbers are at most 14.
Indeed we prove combinatorically that $s_L(2^2_1)=8$,
$s_L(2^2_1 \sharp 2^2_1)=s_L(6^3_2)=s_L(6^3_3)=12$, $s_L(4^2_1)=13$, $s_L(5^2_1)=14$
and any other non-split links have stick numbers at least 15.
\end{abstract}

\maketitle

\section{Introduction} \label{sec:intro}

A link is a union of closed curves in $3$-space $\mathbb{R}^3$.
A knot is a link with one component.
Links are commonly found in molecular chains such as DNA and proteins,
and they have been considered to be useful models for simulating molecular chains.
A link can be embedded in many different ways in $3$-space, smooth or piecewise linear.
{\em Polygonal links\/} are those which consist of line segments, called {\em sticks\/},
attached end-to-end.
This representation of links can be considered to be a reasonable mathematical model
of cyclic molecules or molecular chains because such physical objects have rigidity.
It has many applications in natural science.
In microscopic-level molecules, DNA strands are made up of small rigid sticks of sugar,
phosphorus, nucleotide proteins and hydrogen bonds.
Chemists are also interested in knotted molecules which are formed by a sequence of atoms
bonded end-to-end so that the last one is also bonded to the first.

Concerning polygonal links, one natural problem may be to determine the number of sticks.
The {\em stick number\/} $s(L)$ of a link $L$ is defined to be the minimum number of sticks
required to construct a polygonal representation of the link.
This quantity was investigated for some specific knots including knots
with crossing number below 10 \cite{C,R}, torus knots \cite{J},
2-bridge knots \cite{Mc} and knots with 1, 2 and 3-integer Conway notations \cite{FLS}.
On the other hand Negami \cite{N} found a general upper bound in terms of crossing number $c(K)$
which is $s(K) \leq 2 c(K)$ for a nontrivial knot $K$.
Later Huh and Oh \cite{HO3} improved to $s(K) \leq \frac{3}{2} (c(K)+1)$,
and moreover $s(K) \leq \frac{3}{2} c(K)$ for a non-alternating prime knot.

From now on we deal with anther quantity of polygonal links.
A {\em lattice link\/} is a polygonal link in the cubic lattice $\mathbb{Z}^3$
which is $(\mathbb{R} \times \mathbb{Z} \times \mathbb{Z}) \cup (\mathbb{Z} \times \mathbb{R}
\times \mathbb{Z}) \cup (\mathbb{Z} \times \mathbb{Z} \times \mathbb{R})$.
The {\em lattice stick number\/} $s_L(L)$ of a link $L$ is defined to be the minimum number of sticks
required to construct a polygonal representation of the link in the cubic lattice $\mathbb{Z}^3$.
We may say that this quantity corresponds to the total curvature of smooth knots
which was firstly studied by Milnor \cite{Mi}.
He showed that the total curvature of any smooth non-trivial knot is at least $4 \pi$.
For a polygonal knot in $\mathbb{R}^3$ its total curvature can be defined as the summation of all angles
between every pair of adjacent sticks.
This quantity says how much the modeled polymer turn in space,
therefore is expected to have some connection with physical properties of molecular chains.
In \cite{P}, using numerical simulations, the total curvature of polygons was scaled
as a function of the length of polygons for each knot with up to six crossings.
Also it was reported that the equilibrium length with respect to total curvature,
which appears to be correlated to physical properties of macromolecules,
can be considered to be one of characteristics of each knot under the experiment.
In lattice links, each angle between any pair of adjacent sticks is $\frac{\pi}{2}$.
Hence the total curvature of any link $L$ can be clearly defined
to be $\frac{\pi}{2} s_{L}(L)$ with no necessity to consider its length.
Furthermore the restriction on the position of sticks establishes
some combinatorial arguments which may allow theoretical study on the quantity
other than computational simulations.

For the case of knots, first the reader easily knows that
the lattice stick number of the trivial knot is 4.
Janse van Rensburg and Promislow \cite{JP} proved that $s_L(K) \geq 12$ for any nontrivial knot $K$.
Also they estimated the quantity for various knots via the simulated annealing technique.
Another numerical estimation was performed in \cite{SIADSV}.
On the other hand Huh and Oh \cite{HO1,HO2} proved combinatorically that
$s_L(3_1) = 12$, $s_L(4_1) = 14$ and $s_L(K) \geq 15$ for any other non-trivial knot $K$.
They used an elementary but useful argument, called {\em proper levelness\/}.
Recently Adams et al. \cite{ACCJSZ} announced that $s_L(8_{20}) = s_L(8_{21}) = s_L(9_{46}) = 18$,
$s_L(4^2_1) = 13$ and $s_L(T_{p,p+1}) = 6p$ for $p \geq 2$ where $T_{p,p+1}$ is a $(p,p+1)$-torus knot.
To find the exact values of the lattice stick number of these knots,
they used a lower bound on lattice stick number in terms of bridge number, $s_L(K) \geq 6 b(K)$,
which was proved in \cite{JP}.
Furthermore Diao and Ernst \cite{DE} found a general lower bound in terms of crossing number $c(K)$
which is $s_L(K) \geq 3 \sqrt{c(K)+1} +3$ for a nontrivial knot $K$.
Recently Hong, No and Oh \cite{HNO} found a general upper bound $s_L(K) \leq 3 c(K) +2$,
and moreover $s_L(K) \leq 3 c(K) - 4$ for a non-alternating prime knot.

In this paper we find all links with more than one component whose lattice stick numbers are at most 14.
In the proof we use the {\em extended proper levelness\/}
which is a modified version of the proper levelness of knots to links.
The following theorem is the main result.

\begin{theorem} \label{thm:main}
There are only 6 links (with more than one component) whose lattice stick number is less than or equal to 14.
These links are $2_1^2$, $2_1^2 \sharp 2_1^2$, $6_2^3$, $6_3^3$, $4_1^2$ and $5_1^2$.
Furthermore, it is known that $s_L(2_1^2)=8$, $s_L(2_1^2 \sharp 2_1^2)=s_L(6_2^3)=s_L(6_3^3)=12$,
$s_L(4_1^2)=13$ and we have further shown here that $s_L(5_1^2)=14$.
\end{theorem}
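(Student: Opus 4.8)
The plan is to follow the strategy already used for knots by Huh and Oh, adapted to links via the \emph{extended proper levelness}. First I would set up the combinatorial framework: fix a lattice representation of a link $L$ with $s_L(L) \leq 14$ sticks, and classify sticks by direction into $x$-, $y$-, and $z$-sticks. A \emph{level} in the $z$-direction is a plane $\mathbb{Z}\times\mathbb{Z}\times\{k\}$; a representation is \emph{properly leveled} (in the $z$-direction) if each level contains at most one $z$-stick endpoint configuration in the relevant sense, and one shows by a pigeonhole/averaging argument over the three coordinate directions that one may always assume the representation is properly leveled in at least one direction after a bounded modification. The key counting input is that $n$ sticks split as $n_x + n_y + n_z = n$ and each direction contributes at least two sticks to each component; combined with $s_L(K)\ge 12$ for nontrivial knots and $s_L(K)\ge 15$ for knots other than $3_1,4_1$, this forces the number of sticks available ``across'' components to be very small.

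Next I would carry out the case analysis on the number of components $\mu \geq 2$ and on how the stick budget $\leq 14$ distributes. For $\mu=2$: each component is an unknot (else it alone needs $\geq 12$ sticks, leaving $\leq 2$ for the other component, impossible), so each component uses between $4$ and $10$ sticks. Using the extended proper levelness I would enumerate the possible leveled diagrams: with $\leq 14$ sticks and two unknotted components there are only finitely many combinatorial types of projection, and for each I would compute the linking number and identify the link type, recovering $2_1^2$ (8 sticks, the Hopf link), $4_1^2$ (13 sticks), and $5_1^2$ (14 sticks), and then argue $s_L(5_1^2) \le 14$ by exhibiting an explicit $14$-stick lattice diagram. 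For $\mu=3$: each pair of components again forces unknotted components and severely limits the sticks per component (roughly $4$ or $5$ each, with total $\le 14$ meaning the distribution is essentially $4{+}4{+}4$, $4{+}4{+}5$, or $4{+}4{+}6$); here the enumeration yields the connected sum $2_1^2 \sharp 2_1^2$ and the two $3$-component links $6_2^3$ and $6_3^3$, each realizable with $12$ sticks. For $\mu \geq 4$: one shows $4\mu \leq s_L(L)$ is already $\geq 16$ once one checks that $4$ sticks per component cannot be simultaneously achieved for all components of a genuinely linked $4$-component link, so no such link has lattice stick number $\le 14$; this rules out everything else. The lower bounds $s_L = 12$ for $2_1^2\sharp 2_1^2$, $6_2^3$, $6_3^3$ and $s_L=13$ for $4_1^2$ are cited as known, and $s_L(2_1^2)=8$ is immediate from a direct argument (four sticks per component is impossible for a linked pair, and a $4{+}4$ planar configuration cannot link, so $\geq 8$, with the explicit Hopf-link box giving $\leq 8$).

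The main obstacle I expect is the enumeration step: showing that the \emph{extended proper levelness} actually reduces the a priori infinite family of $\leq 14$-stick lattice link diagrams to a short, tractable list, and then correctly identifying the link type of each surviving diagram (rather than just its linking number, since distinct $3$-component links such as $6_2^3$ versus $6_3^3$ must be told apart). This requires a careful bookkeeping of which levels carry which sticks, how the arcs in adjacent levels can be connected without extra sticks, and a robust way to read off the link invariants from the resulting planar diagram. A secondary difficulty is making the pigeonhole reduction to a properly-leveled representation rigorous in the link setting: unlike the single-component case, the ``jumps'' between levels must be tracked per component, and one must verify that the bounded modification used to achieve proper levelness does not change the link type. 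Once those two points are handled, the remaining arguments are finite case-checks and the explicit constructions for the upper bounds.
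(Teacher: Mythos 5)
Your high-level strategy (extended proper levelness plus a finite case analysis of how at most $14$ sticks distribute among unknotted components) is the same as the paper's, and your dispositions of $\mu\ge 4$ (immediate from $4\mu\ge 16$) and of the stick budgets are essentially right, modulo the small point that a closed lattice polygon has $4$ or at least $6$ sticks, so a $4{+}4{+}5$ distribution cannot occur. However, there is a genuine gap exactly where you flag the ``main obstacle,'' and the paper resolves it with an idea your plan does not contain. Your premise that ``each direction contributes at least two sticks to each component'' is false: a component can be \emph{planar}, with zero sticks in one direction, and it is precisely the planar components that carry all of the interesting links ($4_1^2$, $5_1^2$, and the three-component links all arise that way). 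The paper's engine is a proposition about planar components: if $L^i$ lies in a plane $P^i$ perpendicular to, say, the $z$-axis, then the whole link has \emph{exactly four} $z$-sticks, two passing through the bounded region of $P^i\setminus L^i$ and two through the unbounded region. This single statement (proved by split/reducibility arguments plus stick counting) is what collapses the a priori large enumeration: it pins down the $z$-stick count of the non-planar partner, forces the planar component to have $4$ or $6$ sticks, and kills most candidate projections because they would require three transverse sticks on one side. The case analysis is then organized by the number of planar components (none, exactly one, at least two), not by $\mu$; the three-component links appear only in the ``two planar components'' case, where the proposition forces the two planes to be perpendicular and dictates how the third component threads them. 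Without this proposition, or a substitute of comparable strength, your enumeration step does not terminate in a tractable list, and your identification step (``compute the linking number'') is insufficient since, e.g., the Whitehead link $5_1^2$ has linking number $0$.

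A secondary inaccuracy: the leveling reduction is not a pigeonhole argument achieving proper levelness in one direction after a bounded modification. The paper's Lemma is constructive — at each level one inserts extra parallel levels and slides the connected portions apart, then repeats for all three coordinates — and it achieves extended proper levelness in \emph{all three} directions simultaneously while keeping the number of $x$-, $y$-, and $z$-sticks exactly unchanged. That exactness matters, because the subsequent counting arguments (e.g.\ $(L)\in\{(4,4,4),(4,4,5),(4,4,6),(4,5,5)\}$ in the all-non-planar case) rely on the stick counts per direction being preserved, not merely bounded.
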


Note that the existences of six lattice links representing their lattice stick numbers
as depicted in Figure \ref{fig1} guarantee their upper bounds.

\begin{figure}[ht]
\includegraphics{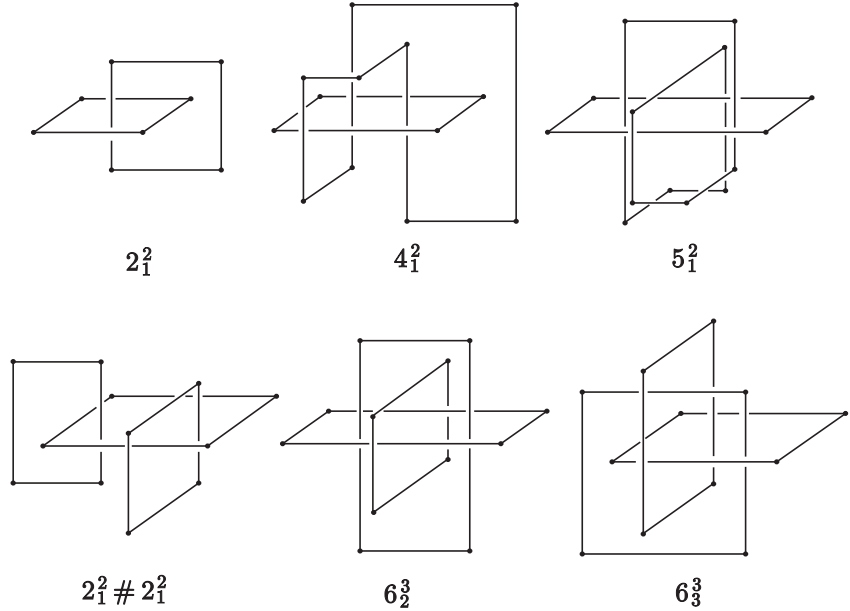}
\caption{Six examples of lattice links}
\label{fig1}
\end{figure}

\section{Extended Proper levelness}

In this section we introduce some definitions and a lemma which are found at Section 2 in \cite{HO1}.
Two lattice links are said to be {\em equivalent\/} if they are ambient isotopic in $\mathbb{R}^3$.
A lattice link $L$ is called {\em reducible\/}
if there is another equivalent lattice link which has fewer sticks.
Otherwise, it is called {\em irreducible\/}.

From now on the notations concerning the $y$ and $z$-coordinates will be defined
in the same manner as the $x$-coordinate.
For an $n$-component lattice link $L$, let $\{L^1, \cdots, L^n\}$ be the set of $n$ components.
$|L|$ denotes the number of sticks of $L$.
A stick in $L$ which is parallel to the $x$-axis is called an {\em $x$-stick\/},
and $|L|_x$ denotes the number of its $x$-sticks.
Let us denote $(L)=(|L|_x,|L|_y,|L|_z)$.
An {\em $x$-level\/} $k$ for some integer $k$ is a $yz$-plane whose $x$-coordinate is $k$.
Then each $y$-stick and $z$-stick lies entirely on an $x$-level.
If all $y$-sticks and $z$-sticks of $L$ are contained in exactly $n$ $x$-levels,
then, without loss of generality,
we may say that these are $x$-levels $1,2,\cdots,n$ like height numbers.
Note that an $x$-stick whose endpoints lie on $x$-levels $i$ and $j$ has length $|i-j|$.

A lattice link $L$ is said to be {\em properly leveled with respect to the $x$-coordinate\/}
if each $x$-level contains exactly two endpoints of $x$-sticks,
so all $y$-sticks and $z$-sticks on an $x$-level are connected as an arc.
$L$ is said to be {\em properly leveled\/} if it is properly leveled with respect to each coordinate.
Furthermore $L$ is said to be {\em extended properly leveled with respect to the $x$-coordinate\/}
if each $x$-level contains
either exactly two endpoints of $x$-sticks or exactly one component $L^i$ of $L$,
and {\em extended properly leveled\/} if it is extended properly leveled with respect to each coordinate.

\begin{lemma} \label{lem:epl}
For a lattice link $L$, there is an extended properly leveled lattice link $L'$ equivalent to $L$
so that $|L'| = |L|$ (indeed, the numbers of $x$, $y$ and $z$-sticks have remained unchanged).
\end{lemma}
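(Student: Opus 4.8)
The plan is to reduce to an equivalent lattice link with the same stick count in which all $y$- and $z$-sticks occupy the minimum possible number of $x$-levels, and then argue that this minimum number is exactly $n$ unless we can further reduce. First I would fix a coordinate, say the $x$-coordinate, and count: each $x$-level that meets $L$ must contain at least one endpoint of an $x$-stick (otherwise the arc of $y$- and $z$-sticks lying on that level would be a closed loop isolated from the rest of $L$, contradicting that the level meets only sticks of a single connected link and that $L$ has finitely many sticks of each type). Since $L$ has $|L|_x$ $x$-sticks, there are $2|L|_x$ such endpoints, distributed among the occupied $x$-levels. If some $x$-level contains three or more endpoints, I want to show $L$ is reducible or can be slid to reduce the number of occupied levels without changing $(|L|_x,|L|_y,|L|_z)$.

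The key move is a \emph{level collapse}: suppose $x$-level $k$ and $x$-level $k'$ both meet $L$ and no $x$-stick endpoint of any component other than on these levels lies strictly between them — more precisely, choose two occupied levels that can be brought together. Translate everything on levels $>k$ (or between $k$ and $k'$) by $-1$ in the $x$-direction; $x$-sticks crossing the gap shrink by $1$, $x$-sticks of length $1$ spanning exactly that gap collapse and are deleted, and the $y$- and $z$-sticks that were on level $k'$ now share level $k$ with whatever was already there. This is an ambient isotopy (a finite sequence of such unit slides, each supported away from the rest of the link, realizes an isotopy), it never increases $|L|_x$, and it leaves $|L|_y$ and $|L|_z$ unchanged. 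Iterating, I push all the $y$- and $z$-sticks onto as few $x$-levels as possible. The remaining content is: after this compression, each occupied $x$-level contains either exactly two $x$-stick endpoints (the ``arc'' case) or, if it contains one entire component $L^i$ and nothing else — equivalently $L^i$ is a planar polygon in that $yz$-plane joined to the rest of $L$ by no $x$-sticks — the single-component case; and no level can contain four or more endpoints after full compression, because four endpoints would mean two disjoint arcs on one level, and one of them could be isolated and slid off to its own level, strictly reducing the count of multiply-occupied levels. An endpoint count then forces the number of occupied $x$-levels to be exactly $n$: components contribute, and the parity/connectivity bookkeeping of how arcs on levels are joined by $x$-sticks into $n$ closed curves pins it down.

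I would then apply the same argument independently to the $y$- and $z$-coordinates. The one subtlety is that compressing in $x$ must not destroy the extended proper levelness already arranged in $y$ or $z$; I would either do the three compressions in sequence and check that a unit slide in the $x$-direction only permutes and merges $y$-levels and $z$-levels in a controlled way (it does not create new $y$- or $z$-sticks, and it does not move an $x$-stick endpoint off its $y$-level or $z$-level), or else carry out all three compressions simultaneously by a single minimization argument. Finally, set $L' $ equal to the fully compressed link; by construction $|L'|_x = |L|_x$, $|L'|_y=|L|_y$, $|L'|_z = |L|_z$, hence $|L'|=|L|$, and $L'$ is extended properly leveled with respect to each coordinate.

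The main obstacle I anticipate is the endpoint-counting step that rules out $x$-levels with four or more $x$-stick endpoints after compression, together with showing the resulting number of occupied levels is exactly $n$ and not merely ``at most something''; getting the isotopy of the level-collapse move completely honest — i.e.\ checking that the unit slide really is ambient isotopic to the identity rel the rest of the link, and that no two distinct sticks are forced to overlap during the slide — is the other place where care is needed, though the excerpt attributes this lemma to \cite{HO1}, so the argument there can be followed and adapted to the multi-component, ``extended'' setting essentially verbatim.
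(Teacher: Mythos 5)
Your proposal goes in the opposite direction from what the statement requires, and this creates genuine problems. Extended proper levelness demands that each $x$-level carry \emph{at most} one connected arc of $y$- and $z$-sticks (two $x$-stick endpoints) or one whole component; achieving this means \emph{spreading} the level sets apart, not merging them. The paper's proof does exactly that: at each $z$-level it counts the $p$ whole components and $q$ connected arcs sitting there, inserts $p+q-1$ new levels, lifts each portion to its own level, and lengthens the attached $z$-sticks so their endpoints follow. This insertion move is always available, manifestly an isotopy, cannot create collisions (portions that were disjoint on one plane stay disjoint on parallel planes), and visibly preserves $(|L|_x,|L|_y,|L|_z)$ since no stick is created or destroyed --- only transverse sticks are stretched. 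Your ``level collapse,'' by contrast, (i) can delete length-one $x$-sticks spanning the collapsed gap, which you acknowledge (``never increases $|L|_x$''), directly violating the requirement that the numbers of $x$-, $y$- and $z$-sticks be unchanged; (ii) can force two distinct arcs onto one plane where their sticks intersect, so the move is not always a legal isotopy of an embedded link; and (iii) actively manufactures levels with four or more endpoints, which you must then undo.

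That undoing step exposes the internal inconsistency: to repair a level with two disjoint arcs you propose to ``slide one of them off to its own level,'' which is precisely the paper's separation move --- but it \emph{increases} the number of occupied levels, contradicting the minimization you set up. You would need a carefully chosen well-founded complexity (not ``number of occupied levels'') for this two-phase scheme to terminate, and you do not supply one. The claim that the endpoint count forces the number of occupied $x$-levels to be ``exactly $n$'' is also off: after extended proper leveling the number of occupied $x$-levels is governed by $|L|_x$ (two endpoints per arc-level) plus the count of components confined to a single level, not by the number of components $n$. The fix is simply to drop the compression phase entirely and run only the separation/insertion argument, which is the paper's proof.
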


\begin{proof}
This proof follows the proof of Lemma 2.1. in \cite{HO1}.
Obviously the part of $L$ at each $z$-level is the disjoint union of some components of $L$ or
some parts of $L^i$'s.
Assume that, at $z$-level $m$, it consists of $p$ entire components $L^i$'s and
$q$ connected parts of some components.
Each of $p+q$ portions consists of $x$-sticks and $y$-sticks.
We will insert $p+q-1$ more $z$-levels between $z$-levels $m$ and $m+1$.
This means that $z$-level $m+1$ goes to $z$-level $m+p+q$.
Now move up $p+q$ portions so that each $z$-level from $z$-level $m$ to $z$-level $m+p+q-1$
contains exactly one portion of them.
Also extend the related $z$-sticks so that their endpoints are attached properly to the portions used above.
Repeat this operation at every $z$-level.
Then each $z$-level contains either exactly two endpoints of $x$-sticks or exactly one component of $L$.
So the resulting lattice link is extended properly leveled with respect to the $z$-coordinate.
In order to get a desired extended properly leveled lattice link $L'$,
repeat the same arguments regarding the $x, y$-coordinates.
One notices that $L'$ is equivalent to $L$
and the numbers of $x$, $y$ and $z$-sticks have remained unchanged.
\end{proof}

\section{Proof of Theorem \ref{thm:main}}

In this section we will prove Theorem \ref{thm:main}.
Let $L=\{L^1,\cdots L^n\}$ be a non-split irreducible $n$-component lattice link with $n \geq 2$.
Furthermore we assume that $|L| \leq 14$.
We may assume that $L$ is extended properly leveled by Lemma \ref{lem:epl}.

We can easily show the followings for each component $L^i$ and each coordinate, say $x$;
$|L^i|_x \neq 1$, $|L^i|_x \leq |L^i|/2$ and also $|L^i| = 4$ or $|L^i| \geq 6$.
Thus we can easily prove that $s_L(2^2_1)=8$ and $s_L(2^2_1 \sharp 2^2_1)=s_L(6^3_2)=s_L(6^3_3)=12$.
From now on we assume that $L$ is neither $2^2_1$ nor $2^2_1 \sharp 2^2_1$.

Theorem 1.1 in \cite{HO1} says that $s_L(K) \geq 12$ for a non-trivial knot $K$.
Therefore $L$ must consist of only two or three trivial knot components.
$L^i$ is called {\em planar\/} if it is contained in a plane and let $P^i$ be this plane.
Let $P_b^i$ and $P_u^i$ be the bounded component and the unbounded component
of $P^i \setminus L^i$, respectively.
Now we introduce a useful proposition.

\begin{proposition} \label{prop:4pt}
Suppose that $L$ is a non-split link with at most 14 sticks
which is neither $2^2_1$ nor $2^2_1 \sharp 2^2_1$.
Suppose that $L^i$ is planar for some $i$ where $P^i$ is perpendicular to the $z$-axis, for example.
Then $L$ has exactly four $z$-sticks,
and furthermore two $z$-sticks pass through $P_b^i$ and the other two $z$-sticks pass through $P_u^i$.
\end{proposition}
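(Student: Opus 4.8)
The plan is to analyze how the other components of $L$ must interact with the planar component $L^i$, and to count $z$-sticks carefully. First I would note that since $L$ is non-split and not a single unknot, there must be at least one other component, and in fact $L$ links $L^i$ in an essential way: the plane $P^i$ separates $\mathbb{R}^3$ locally near $L^i$, and any component linked with $L^i$ must cross $P^i$. More precisely, $L^i$ bounds a disk $D$ in $P^i$ with $\partial D = L^i$ (here $D$ is essentially $\overline{P_b^i}$), and since $L$ is non-split, the rest of the link $L \setminus L^i$ must intersect the interior of $D$; otherwise one could isotope $L^i$ away from the other components. Each such intersection point of $L \setminus L^i$ with $\mathrm{int}(D)$ occurs along a $z$-stick (since $P^i \perp z$-axis and $L^i$ has no $z$-sticks, being planar in $P^i$), so there is at least one $z$-stick through $P_b^i$. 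By a parity/linking-number argument, $z$-sticks passing through $P_b^i$ come in pairs going up and down as one traverses each component of $L\setminus L^i$; hence there are at least two $z$-sticks through $P_b^i$.

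Next I would bound things from above using $|L| \le 14$ together with the structural facts already established in the excerpt: each component has $4$ sticks or at least $6$, $|L^i|_x \ne 1$, $|L^i|_x \le |L^i|/2$, and $L$ has two or three unknot components. Since $L^i$ is planar and perpendicular to the $z$-axis, $|L^i|_z = 0$, so all $z$-sticks of $L$ belong to the other component(s). Using extended proper levelness with respect to the $z$-coordinate and the fact that $L$ is neither $2^2_1$ nor $2^2_1 \sharp 2^2_1$, I would argue that $|L|_z \le 4$: a component contributing $z$-sticks contributes at least two of them, and the total stick budget, combined with the need for $L^i$ to also be linked in the $x$- or $y$-directions (so that $L^i$ genuinely bounds a disk pierced by the rest), forces $|L|_z$ down to exactly $4$. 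The case analysis here splits on whether $L$ has two or three components and on how the $4$ (or fewer) $z$-sticks are distributed; the point is that more than four $z$-sticks would push $|L|$ past $14$ once the minimal stick counts of the components and the forced $x$- or $y$-sticks are added up.

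Finally, given that $L$ has exactly four $z$-sticks and at least two of them pass through $P_b^i$, I would show that exactly two pass through $P_b^i$ and the remaining two pass through $P_u^i$. If three or four $z$-sticks passed through $P_b^i$, then the other component(s) would be "trapped" on one side and, by a reducibility argument (one could shrink $L^i$ and remove sticks, contradicting irreducibility) or by recounting, this is impossible; so exactly two pass through $P_b^i$. The other two $z$-sticks, being the only ones left and belonging to a closed component that must return, cannot both lie over $P_b^i$, nor can a $z$-stick fail to cross $P^i$ at all unless it crosses in $P_u^i$ — and a component that crosses $P^i$ only in $P_u^i$ an even number of times needs at least two such crossings. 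Tallying, the remaining two $z$-sticks pass through $P_u^i$.

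The main obstacle I expect is the middle step: pinning down $|L|_z = 4$ exactly, rather than merely bounding it. This requires genuinely using the hypothesis that $L$ is neither $2^2_1$ nor $2^2_1 \sharp 2^2_1$ (to rule out the very economical configurations with only two $z$-sticks) together with a careful simultaneous accounting of $x$-, $y$-, and $z$-sticks across all components under extended proper levelness — the interaction of these constraints is what makes the count tight, and getting the casework complete without gaps is the delicate part.
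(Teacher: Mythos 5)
Your overall shape (lower-bound the piercings of $P_b^i$ and $P_u^i$, upper-bound the total number of $z$-sticks by stick counting) matches the paper, but two of your key steps do not hold as stated. First, the claim that ``$z$-sticks passing through $P_b^i$ come in pairs'' by parity or linking number is false: a closed component must cross the whole plane $P^i$ an even number of times, but it can cross the disk $P_b^i$ exactly once and return through $P_u^i$ (this is precisely what happens in the Hopf link). So parity gives you at least one $z$-stick through $P_b^i$, not two. The paper rules out the single-piercing case by shrinking $L^i$ into a small neighborhood of the unique piercing point, which exhibits $L$ as $2^2_1$ or as having a $2^2_1$ connected summand; the latter is then excluded because $|L\setminus L^i|\le 10$ would have to be a two-component link of trivial knots, hence $0^2_1$ or $2^2_1$. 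This is exactly where the hypothesis that $L$ is neither $2^2_1$ nor $2^2_1\sharp 2^2_1$ is consumed — not, as you suggest, in the middle counting step. The same shrinking argument (viewed across the disk $P_u^i\cup\{\infty\}$ in $S^3$) is what gives at least two piercings of $P_u^i$, which you need for the final ``two and two'' conclusion but never actually establish.

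Second, your counting step cannot pin down $|L|_z=4$ by budget alone. Six or more $z$-sticks is easy ($|L^j|_z\le|L^j|/2$ gives $|L\setminus L^i|\ge 12$, so $|L|\ge 16$), but five $z$-sticks yields only $|L\setminus L^i|\ge 10$ and hence $|L|\ge 14$, which is perfectly consistent with the hypothesis $|L|\le 14$. The missing idea is that an odd number of $z$-sticks cannot all pierce $P^i$ (each component crosses the plane an even number of times), so one $z$-stick $z_1$ misses $P^i$ entirely; irreducibility then forces additional $x$- or $y$-sticks above and below the levels of the sticks attached to $z_1$ (otherwise $z_1$ could be shrunk away), and this pushes the total to at least $15$. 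Without this argument the five-$z$-stick case survives, and your proof of ``exactly four $z$-sticks'' has a hole. The final paragraph's ``trapped/recounting'' argument for excluding three or four piercings of $P_b^i$ is likewise not needed once both lower bounds (two through $P_b^i$ and two through $P_u^i$) are in hand, since they already force the $2+2$ split among exactly four $z$-sticks.
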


\begin{proof}
If none of $z$-sticks of $L$ passes through $P_b^i$,
then we can apart $L^i$ from the other components of $L$.
Thus $L$ is a split lattice link.
If only one $z$-stick of $L$ passes through $P_b^i$,
then we can shrink $L^i$ into a small neighborhood of the passing point.
This means that $L$ is $2_1^2$ or has a $2_1^2$ connected summand.
In the latter case $|L \setminus L^i| \leq 10$ which consists of two trivial knot components.
Thus it must be either $0_1^2$ or $2_1^2$, a contradiction.
Thus at least two $z$-sticks pass through $P_b^i$ (similarly for $P_u^i$),
and so at least four $z$-sticks pass through $P^i$.

Furthermore if $L$ has six or more $z$-sticks,
then $L \setminus L^i$ has at least twelve sticks, contradicting $|L| \leq 14$.
If $L$ has five $z$-sticks, then one of them, say $z_1$, cannot meet $P^i$.
Let $a$ and $b$ be the $z$-levels of the endpoints of $z_1$
where $a < b$ and they are above, say, the $z$-level of $P^i$.
Let $w_a$ and $w_b$ be the $x$- or $y$-sticks attached to $z_1$
at $z$-levels $a$ and $b$, respectively.
Then there must be other two $x$- or $y$-sticks so that
one stick lies above $w_a$ and one stick lies between $w_b$ and $P^i$.
Otherwise, $L$ is reducible by shrinking the stick $z_1$.
Thus $L$ has at least four $x$- or $y$-sticks above $P^i$ and
at least two $x$- or $y$-sticks below $P^i$.
This implies that $|L| \geq 15$.
This completes the proof.
\end{proof}

Now we consider $L$ in the following three different cases;

\begin{itemize}
\item $L$ consists of only non-planar components.
\item $L$ contains exactly one planar component.
\item $L$ contains at least two planar components.
\end{itemize}
\vspace{2mm}

\noindent {\bf Case  1.} $L$ consists of only non-planar components.

Then $L$ is a 2-component link whose components are $L^1$ and $L^2$.
Each component contains at least two $x$-sticks, two $y$-sticks and two $z$-sticks
since both are non-planar.
We may say that $(L)$ is one of $(4,4,4)$, $(4,4,5)$, $(4,4,6)$ or $(4,5,5)$.

Now consider the projection of $L$ onto an $xy$-plane.
If $L^1$ has exactly two $x$-sticks and two $y$-sticks,
then its projection must be one of the left two diagrams in the first row of Figure \ref{fig2}.
If $L^1$ has exactly two $x$-sticks and three $y$-sticks,
then its projection is one of the other diagrams in the figure.
Note that we draw the projections where some sticks are perturbed slightly
just for better view, but these do not mean real.
We do similarly for $L^2$.

\begin{figure}[ht]
\includegraphics{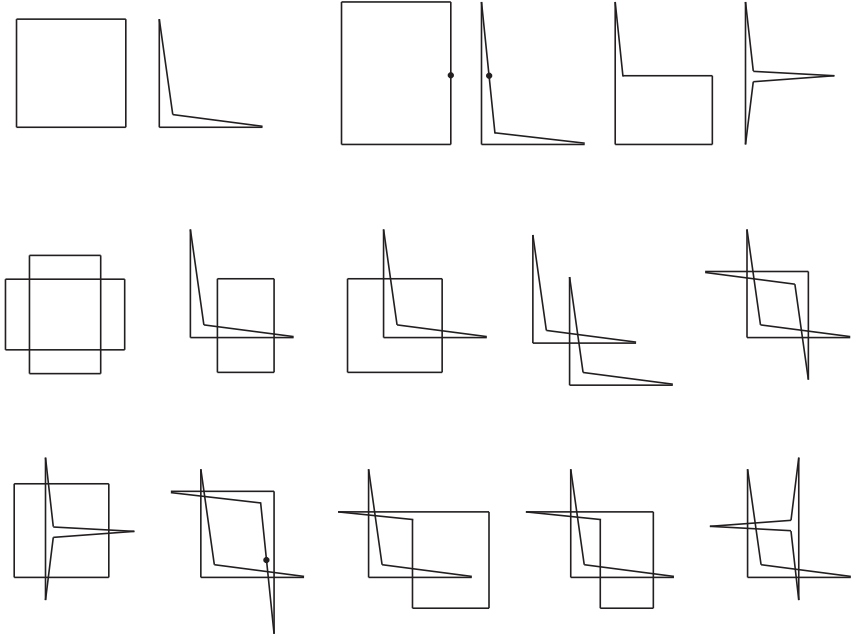}
\caption{Two non-planar components}
\label{fig2}
\end{figure}

First, consider that $|L^1|_x=|L^1|_y=|L^2|_x=|L^2|_y=2$.
Then all possible projections of $L$ are illustrated in the second row of Figure \ref{fig2}.
Indeed we ignored the projections with 0 or 2 crossings since $L$ is non-split and not $2^2_1$.
Each of the first four projections has four crossings,
but we can not make it an alternating diagram in any choices of positions of $z$-sticks.
This implies that $L$ must be split or $2^2_1$.
The last projection has eight crossings.
In any choices of positions of $z$-sticks, $L$ is one of a split link, $2^2_1$ or $4^2_1$.
To construct $4^2_1$ from this projection, six $z$-sticks are needed,
so fourteen sticks are needed in total, contradicting $s_L(4^2_1) \leq 13$.

Now assume that $|L^1|_x=|L^1|_y=|L^2|_x=2$ and $|L^2|_y=3$.
This is the case that $(L)=(4,5,5)$, so $|L|=14$.
It is sufficient to check the projections with more than five crossings.
Such projections are illustrated in the third row of Figure \ref{fig2}.
For the first or the third projection
we can not make it an alternating diagram in any choices of positions of $z$-sticks, so $c(L) < 6$.
For the second projection, $L$ is reducible.
The last two projections have eight crossings.
But in any choices of positions of five $z$-sticks, one can figure out that $c(L) \leq 5$.
\vspace{3mm}

\noindent {\bf Case 2.} $L$ contains exactly one planar component, say $L^1$.

Obviously $L$ is a 2-component link and let $L^2$ be the other non-planar component.
Assume that $P^1$ is perpendicular to the $z$-axis.
Proposition \ref{prop:4pt} guarantees that $L^2$ has exactly four $z$-sticks
and only two sticks pass through $P^1_b$.
Also $L^2$ has at least two $x$-sticks and $y$-sticks,
and thus $L^1$ consists of four or six sticks.

First, consider the case that $|L| \leq 12$.
Then $L^1$ consists of two $x$-sticks and two $y$-sticks,
and $L^2$ consists of four $z$-sticks, two $x$-sticks and two $y$-sticks.
Now consider the projection of $L$ onto an $xy$-plane.
The projection of $L^2$ must be rectangular
because each pair of an $x$-stick and a $y$-stick is connected by a $z$-stick
and these four $z$-sticks can not be overlapped in the projection.
This implies that $c(L) \leq 2$.

Second, consider the case that $|L| = 13$.
Assume for contradiction that $c(L) \geq 5$.
Similarly $L^1$ consists of two $x$-sticks and two $y$-sticks,
and $L^2$ consists of four $z$-sticks, two $x$-sticks and three $y$-sticks
(or three $x$-sticks and two $y$-sticks).
Thus the projection of $L^2$ is one of the right four diagrams in the first row of Figure \ref{fig2}.
There is only one choice of combinations of $L^1$ and $L^2$ so that $c(L) \geq 5$
which is illustrated by the leftmost figure in the third row of Figure \ref{fig2}.
But this contradicts to Proposition \ref{prop:4pt} because three $z$-sticks pass through $P^1_u$.
Therefore $c(L) \leq 4$ and $4^2_1$ is the only one link whose lattice stick number is 13.

Finally consider the case that $|L| = 14$.
Assume for contradiction that $c(L) \geq 6$.
When $L^1$ consists of six sticks such as three $x$-sticks and three $y$-sticks,
$L^2$ so consists of four $z$-sticks, two $x$-sticks and two $y$-sticks.
Then $L^1$ is an `L' shaped planar six-gon and the projection of $L^2$ is a rectangle.
This implies that $c(L) \leq 4$.
So we assume that $L^1$ consists of two $x$-sticks and two $y$-sticks.
When $L^2$ consists of four $z$-sticks, two $x$-sticks and four $y$-sticks
(or four $x$-sticks and two $y$-sticks),
there are four $y$-levels for $L^2$ and the three types of its four $y$-sticks are illustrated
as the left three figures in the first row of Figure \ref{fig3}.
And its two $x$-sticks can be located in six different ways considering symmetry
as in the right six figures.

\begin{figure}[ht]
\includegraphics{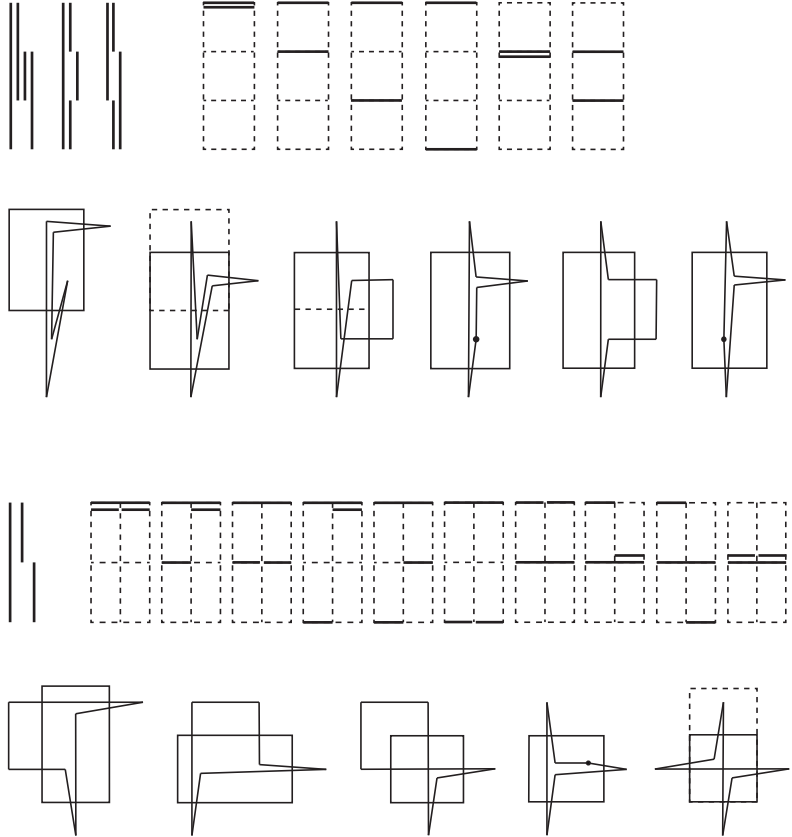}
\caption{one planar and one non-planar components}
\label{fig3}
\end{figure}

Among all 18 combinations only six diagrams illustrated in the second row
can have at least 6 crossings, but all the related $L^2$ pass through $P^1_u$
at least three points, a contradiction.
Note that in the third and fifth diagrams the rightmost $y$-stick of $L^2$ must be connected
to a $z$-stick, for otherwise we can reduce 2 crossings by a Reidermeister II move.
When $L^2$ consists of four $z$-sticks, three $x$-sticks and three $y$-sticks,
there are three $y$-levels for $L^2$ and its three $y$-sticks are illustrated
as the leftmost figure in the third row of Figure \ref{fig3}.
And its three $x$-sticks can be located in ten different ways considering symmetry
as in the right ten figures.
Among all possible combinations only five diagrams illustrated in the fourth row
can have at least 6 crossings, but all the related $L^2$ pass through $P^1_u$
at least three points as in the previous case, a contradiction.
Therefore $c(L) \leq 5$ and $5^2_1$ is the only one link whose lattice stick number is 14.
\vspace{3mm}

\noindent {\bf Case 3.} $L$ contains at least two planar components, say $L^1$ and $L^2$.

If $L$ is a 2-component link,
then Proposition \ref{prop:4pt} guarantees that
two planes $P^1$ and $P^2$ are perpendicular to each other.
Without loss of generality, we may say that $P^1$ is perpendicular to the $z$-axis
and $P^2$ is perpendicular to the $y$-axis.
Again Proposition \ref{prop:4pt} implies that $L^1$ has four $y$-sticks
and $L^2$ has four $z$-sticks.
Thus $|L^1| \geq 8$ and $|L^2| \geq 8$, i.e. $|L| \geq 16$, a contradiction.

Thus $L$ is a 3-component link and let $L^3$ be the third component.
Since each component consists of either four sticks or at least six sticks,
we may assume that $L^1$ and $L^2$ consist of four sticks respectively
and $L^3$ consists of four or six sticks.
By Proposition \ref{prop:4pt}, $L^1$ must pass through $P^2$ at two points
and similarly $L^2$ must pass through $P^1$ at two points.
Otherwise, if $L^3$ passes through $P^2$ at four points, then $|L^3| \geq 8$.
We may say that $P^1$ is perpendicular to the $z$-axis
and $P^2$ is perpendicular to the $y$-axis.

First assume that $P_b^1$ and $P_b^2$ do not meet each other.
Since $L^1$ do not pass through $P_b^2$ and $L^2$ do not pass through $P_b^1$,
$L^3$ must passes through $P_b^1$ at two points and $P_b^2$ at two points.
Then the two $z$-sticks of $L^3$ passing through $P_b^1$ must be connected to
the two $y$-sticks of $L^3$ passing through $P_b^2$ as in Figure \ref{fig4}(a).
Then, obviously $L$ is reducible.

\begin{figure}[ht]
\includegraphics{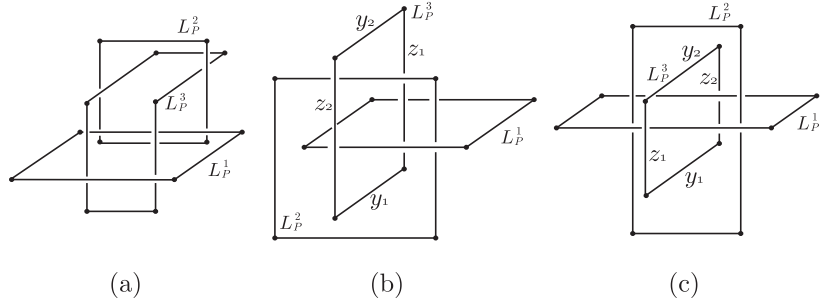}
\caption{3-component link cases}
\label{fig4}
\end{figure}

Next assume that $L^2$ passes through $P_b^1$ at one point.
There are two $z$-sticks of $L^3$ passing through $P^1$ and
let $z_1$ and $z_2$ be the $z$-sicks passing through $P_b^1$ and $P_u^1$, respectively.
Similarly let $y_1$ and $y_2$ be two $y$-sicks of $L^3$
passing through $P_b^2$ and $P_u^2$, respectively.
If $L^3$ consists of four sticks,
then $z_1$, $y_1$, $z_2$ and $y_2$ are consecutive in this order,
so obviously $L$ is $6^3_3$ as in Figure \ref{fig4}(b).
If $L^3$ consists of six sticks, for any pair of $z_i$ and $y_j$, $i,j = 1,2$,
$z_i$ and $y_j$ is adjacent or connected by at most two other sticks.
But these extra sticks can not pass through $P^1$ and $P^2$,
so $L$ is $6^3_3$ again.

Finally assume that $L^2$ passes through $P_b^1$ at two points
(or similarly $L^1$ passes through $P_b^2$ at two points).
Since the two $y$ sticks of $L^1$ passing through $P_u^2$,
$L^3$ must have two $y$-sticks $y_1$ and $y_2$ passing through $P_b^2$.
Similarly $L^3$ also has two $z$-sticks $z_1$ and $z_2$ passing through $P_u^1$.
First, consider the case that $y_1$ and $y_2$ lie on the other sides of the plane $P^1$.
If $L^3$ consists of four sticks,
then $z_1$, $y_1$, $z_2$ and $y_2$ are consecutive in this order,
so obviously $L$ is $6^3_2$ as in Figure \ref{fig4}(c).
If $L^3$ consists of six sticks, for any pair of $z_i$ and $y_j$, $i,j = 1,2$,
$z_i$ and $y_j$ is adjacent or connected by at most two other sticks.
But these extra sticks can not pass through $P^1$ and $P^2$,
so $L$ is $6^3_2$ again.
Now assume that $y_1$ and $y_2$ lie on the same side of $P^1$, say above $P^1$.
Then no stick passes through the component of $P_b^2 \setminus P_b^1$ which is below $P^1$.
Also only two $z$-sticks of $L^2$ which are parts of the boundary of $P_b^2$ pass through $P_b^1$.
This implies that $L^1$ can be split apart from the other components of $L$ by an isotope,
so $L$ is a split link, a contradiction.

This completes the proof of Theorem \ref{thm:main}.

\section{Conclusion}

In this paper we consider links with more than one component with small crossings.
The proper levelness was very useful to put especially knots in the cubic lattice properly.
But in the case of links, it doesn't work any more.
So we propose a modified version, so-called the extended proper levelness.
This give us the way to put links in the cubic lattice properly as we want.
Unlike knots, there are six links have the lattice stick number at most 14.
In this paper the lattice stick numbers of all six links were found.
For future work, we try to find all links with the lattice stick number 15.
In this case we have to handle more diagrams in each case.
Lastly, we want to point out that our results are obtained by
purely analytic calculations instead of numerical calculations.

\end{document}